\documentclass[a4paper]{amsart}
\usepackage{amsmath}
\usepackage{amsthm}
\usepackage{amsfonts}

\newcommand{\F}{\mathbb{F}}
\newcommand{\Q}{\mathbb{Q}}
\newcommand{\Z}{\mathbb{Z}}
\renewcommand{\P}{\mathbb{P}}
\newcommand{\A}{\mathbb{A}}
\newcommand{\X}{\mathcal{X}}
\newcommand{\Y}{\mathcal{Y}}
\newcommand{\kbar}{\bar{k}}
\renewcommand{\H}{\mathrm{H}}
\newcommand{\ad}{\mathbf{A}}

\DeclareMathOperator{\Br}{Br}
\DeclareMathOperator{\Pic}{Pic}
\DeclareMathOperator{\Gal}{Gal}
\DeclareMathOperator{\im}{im}
\DeclareMathOperator{\inv}{inv}
\DeclareMathOperator{\Tr}{Tr}
\DeclareMathOperator{\Spec}{Spec}

\newtheorem{theorem}{Theorem}[section]
\newtheorem{lemma}[theorem]{Lemma}

\title{Failure of strong approximation on an affine cone}
\author{Martin Bright}
\address{Mathematisch Instituut \\ Niels Bohrweg 1 \\ 2333 CA Leiden \\ Netherlands}
\email{m.j.bright@math.leidenuniv.nl}
\author{Ivo Kok}
\address{Mathematisch Instituut \\ Niels Bohrweg 1 \\ 2333 CA Leiden \\ Netherlands}
\email{ivokok@telfort.nl}

\subjclass[2010]{Primary 11G35; Secondary 14G25, 14F22, 11D09} 

\begin{document}

\begin{abstract}
We use the Brauer--Manin obstruction to strong approximation on a punctured affine cone to explain
why some mod $p$ solutions to a homogeneous Diophantine equation of degree $2$ cannot be lifted to coprime integer solutions.
\end{abstract}

\maketitle

\section{Introduction}

Let $Y \subset \P^3_\Q$ be the quadric surface defined by the equation
\begin{equation}\label{eq:Y}
X_0^2 + 47 X_1^2 = 103 X_2^2 + (17 \times 47 \times 103) X_3^2.
\end{equation}
One can easily check that $Y$ is everywhere locally soluble, and so has rational points.
Being a quadric surface, $Y$ satisfies weak approximation.  In particular, if we fix a prime $p$, then any smooth point on the reduction of $Y$ at $p$ lifts to a rational point of $Y$.
Given that a point on the reduction of $Y$ is given by $(\tilde{x}_0, \tilde{x}_1, \tilde{x}_2, \tilde{x}_3) \in \F_p^4$ satisfying~\eqref{eq:Y}, and a point of $Y(\Q)$ can be given by coprime integers $(x_0,x_1,x_2,x_3) \in \Z^4$ satisfying~\eqref{eq:Y}, one might be tempted to think that every $\F_p$-solution $(\tilde{x}_0, \tilde{x}_1, \tilde{x}_2, \tilde{x}_3)$ can be lifted to a coprime integer solution $(x_0,x_1,x_2,x_3)$.

However, at the end of the article~\cite{Bright:vanishing}, it was remarked that $Y$ has the following interesting feature: if $(\tilde{x}_0, \tilde{x}_1, \tilde{x}_2, \tilde{x}_3)$ is a solution to~\eqref{eq:Y} over $\F_{17}$, then at most half of the non-zero scalar multiples of $(\tilde{x}_0,\tilde{x}_1,\tilde{x}_2,\tilde{x}_3) \in \F_{17}^4$ can be lifted to coprime 4-tuples $(x_0,x_1,x_2,x_3) \in \Z^4$ defining a point of $Y$.
That observation was a by-product of the calculation of the Brauer--Manin obstruction to rational points on a diagonal quartic surface related to $Y$.
In this note we will interpret the observation as a failure of strong approximation on the punctured affine cone over $Y$, and will show that this failure is itself due to a Brauer--Manin obstruction.

The same phenomenon has been observed by Lindqvist~\cite{Lindqvist} in the case of the quadric surface $X_0^2 - pqX_1^2 - X_2X_3$, for $p,q$ odd primes congruent to $1$ modulo $8$.  We expect that example also to be explained by a Brauer--Manin obstruction.

Following Colliot-Th\'el\`ene and Xu~\cite{CTX:AA-2013}, for a variety $X$ over $\Q$, we define $X(\ad_\Q)$ to be the set of adelic points of $X$, that is, the restricted product of $X(\Q_v)$ for all places $v$, with respect to the subsets $X(\Z_v)$.
(One needs to choose a model of $X$ to make sense of the notation $X(\Z_v)$, but since any two models agree outside a finite set of primes the resulting definition of $X(\ad_\Q^\infty)$ does not depend on the choice of model.)
Similarly, define $X(\ad_\Q^\infty)$ to be the set of adelic points of $X$ away from $\infty$, that is, the restricted product of $X(\Q_v)$ for $v \neq \infty$ with respect to the subsets $X(\Z_v)$.   Assuming that $X$ has points over every completion of $\Q$, we say that $X$ satisfies \emph{strong approximation away from $\infty$} if the image of the diagonal map $X(\Q) \to X(\ad_\Q^\infty)$ is dense.

If a variety $X$ does not satisfy strong approximation, this can sometimes be explained by a \emph{Brauer--Manin obstruction}.
Define
\[
X(\ad_\Q)^{\Br} = \{ (P_v) \in X(\ad_\Q) \mid \sum_v \inv_v A(P_v)=0 \text{ for all } A \in \Br X \},
\]
and define $X(\ad_\Q^\infty)^{\Br}$ to be the image of $X(\ad_\Q)^{\Br}$ under the natural projection map $X(\ad_\Q) \to X(\ad_\Q^\infty)$.  Then $X(\ad_\Q^\infty)^{\Br}$ is a closed subset of $X(\ad_\Q^\infty)$ that contains the image of $X(\Q)$.  If $X(\ad_\Q^\infty)^{\Br} \neq X(\ad_\Q^\infty)$, we say that there is a Brauer--Manin obstruction to strong approximation away from $\infty$ on $X$.

We now return to the variety $Y$ defined above.
Let $X \subset \A^4_\Q$ be the punctured affine cone over $Y$: that is, $X$ is the complement of the point $O = (0,0,0,0)$ in the affine variety defined by the equation~\eqref{eq:Y}.  There is a natural morphism $\pi \colon X \to Y$ given by restricting the usual quotient map $\A^4 \setminus \{O\} \to \P^3$, so that $X$ is realised as a $\mathbf{G}_{\mathrm{m}}$-torsor over $Y$.
To talk about integral points, we must choose a model: let $\X \subset \A^4_\Z$ be the complement of the section $(0,0,0,0)$ in the scheme defined by the equation~\eqref{eq:Y} over $\Z$.
If we let $f \in \Z[X_0,X_1,X_2,X_3]$ be the polynomial defining $Y$, then the integral points of $\X$ are given by
\[
\X(\Z) = \{ (x_0,x_1,x_2,x_3) \in \Z^4 \mid x_0,x_1,x_2,x_3 \text{ coprime, }f(x_0,x_1,x_2,x_3)=0 \}.
\]

\begin{theorem}
The group $\Br X / \Br \Q$ has order $2$; a generator is given by the quaternion algebra $(17,g)$, where $g \in \Z[X_0,X_1,X_2,X_3]$ is a homogeneous linear form defining the tangent hyperplane to $X$ at a rational point $P \in X(\Q)$.  There is a Brauer--Manin obstruction to strong approximation on $X$ away from $\infty$.  
Specifically, for any smooth point $\tilde{Q} \in \X(\F_{17})$, at most half of the scalar multiples of $\tilde{Q}$ lift to integer points of $\X$.
\end{theorem}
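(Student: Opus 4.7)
My strategy for computing $\Br X/\Br \Q$ is to exhibit a candidate generator and bound the group from above. For the generator, I would verify in two steps that $(17, g)$ represents a non-trivial element of $\Br X/\Br \Q$. First, the quaternion algebra $(17, g)$ extends to all of $X$ because its only possible residue is at the divisor $\{g = 0\} \cap X$, which is the cone over the two lines cut on $Y$ by the tangent hyperplane at $\pi(P)$; these two lines are conjugate over $\Q(\sqrt{17})$, so the residue field contains $\sqrt{17}$ and the residue (represented by the class of $17$) is trivial. Second, $(17, g)$ is non-constant modulo $\Br \Q$ because its $17$-adic evaluation $(17, g(P))_{17}$ takes both values of the Hilbert symbol as $P$ varies over $X(\Q_{17})$, which rules out equality to any fixed element of $\Br \Q$. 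To bound $\Br X / \Br \Q$ from above by $\Z/2\Z$, I would use the Leray spectral sequence for the $\mathbf{G}_{\mathrm{m}}$-torsor $\pi \colon X \to Y$ together with a computation of the Galois action on $\Pic \bar{Y} \cong \Z^2$: the discriminant of the defining quadratic form equals $17$ times a rational square, so the two rulings are conjugate under $\Gal(\Q(\sqrt{17})/\Q)$, and this determines $\Br X$ via the Leray $E_2$ terms.

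\textbf{Local invariants.} Next I would compute $\inv_v(A)(P_v)$ for $A = (17, g)$ place by place on $\X(\Z_v)$. At $v = \infty$, the algebra is split over $\R$ since $17 > 0$, so $\inv_\infty(A) = 0$. At an odd prime $v \neq 17$ of good reduction for the class, the tame Hilbert symbol formula combined with the fact that $A$ extends to an unramified class on the integral model gives $\inv_v(A)(P) = 0$ for every $P \in \X(\Z_v)$. At $v = 17$, the Hilbert symbol $(17, g(P))_{17}$ reduces to the Legendre symbol of $g(P) \bmod 17$, so $\inv_{17}(A)(P)$ equals $0$ or $\tfrac{1}{2}$ according to whether $g(P)$ is a quadratic residue modulo $17$. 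At $v = 2$ (and possibly at $v \in \{47, 103\}$, which divide the coefficients of $f$) I would verify $\inv_v = 0$ on integer points by explicit Hilbert symbol computation; I expect this to be the most painstaking step.

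\textbf{Quantitative claim.} Fix a smooth $\tilde Q \in \X(\F_{17})$ and pick a Hensel lift $Q_1 \in \X(\Z_{17})$. For $\lambda \in \F_{17}^*$, the scalar multiple $\lambda \tilde Q$ lifts $17$-adically to $\lambda Q_1 \in \X(\Z_{17})$, and by linearity of $g$ we have $g(\lambda Q_1) = \lambda g(Q_1)$, hence
\[
\inv_{17}(A)(\lambda Q_1) = \inv_{17}(17, \lambda) + \inv_{17}(17, g(Q_1)).
\]
The first term equals $\tfrac{1}{2}$ precisely when $\lambda$ is a quadratic non-residue modulo $17$. Any $Q' \in \X(\Z)$ lifting $\lambda \tilde Q$ must satisfy the Brauer--Manin relation $\sum_v \inv_v(A)(Q') = 0$; since the preceding paragraph shows all local contributions at $v \neq 17$ vanish, this forces a fixed value of $\inv_{17}(A)(\lambda Q_1)$, which is attained by exactly $8$ of the $16$ values of $\lambda$. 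Thus at most $8$ scalar multiples of $\tilde Q$ can lift to $\X(\Z)$, as claimed. The main technical obstacle is the collection of local invariant computations at the bad primes $v \in \{2, 47, 103\}$; the Brauer group analysis is conceptually delicate too, since $(17, g)$ does not descend to a class on $Y$ (the linear form $g$ is a section of $\mathcal{O}(1)$, not a rational function on $Y$), so a direct residue-plus-evaluation argument replaces the usual Hochschild--Serre comparison.
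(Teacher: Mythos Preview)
Your overall strategy is sound and close to the paper's, but you miss one key simplification and handle the remaining primes by a different device than the paper does.

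\textbf{The bad primes are actually the easy ones.} You flag $v\in\{2,47,103\}$ as the ``most painstaking step'', to be done by explicit Hilbert-symbol calculation. The paper instead observes that $17$ is a square in $\Q_v$ for each of these places (indeed $17\equiv 1\pmod 8$, and quadratic reciprocity gives $(17/47)=(17/103)=1$), so $(17,g)$ already vanishes in $\Br(X\times_\Q\Q_v)$ and the local invariant is identically zero. The same remark covers $v=\infty$. No computation with integral points is needed at these places.

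\textbf{Primes where $17$ is a non-square.} For odd $p\neq 17$ with $17\notin(\Q_p^\times)^2$ you assert that $A$ extends to $\Br(\X_{\Z_p})$ and conclude $\inv_p=0$ on $\X(\Z_p)$. This is correct in outline, but your write-up doesn't address what happens when $g(Q)\equiv 0\pmod p$, i.e.\ when $\pi(Q)$ reduces to $\pi(P)$; at such $Q$ the cocycle $(17,g)$ is not a valid local representative, and you need an alternative description of the class there. The paper avoids arguing about extension to the integral model altogether: it first proves that for any two integral base points $P,P'$ the classes $(17,\ell_P)$ and $(17,\ell_{P'})$ are equal in $\Br X$ (not merely modulo $\Br\Q$), by checking that their difference has trivial invariant at every $v\neq 17$ and invoking the product formula. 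Then, given $Q\in\X(\Z_p)$, it uses weak approximation on $Y$ to pick $P'$ with $\pi(P')\not\equiv\pi(Q)\pmod p$, so that $\ell_{P'}(Q)$ is a $p$-unit and the tame symbol gives $\inv_p=0$ directly. Your extension-to-the-model argument and the paper's base-point-switching trick lead to the same conclusion; the latter is more elementary but the former, once justified, is more conceptual. The same issue (what if $g(Q_1)=0$, i.e.\ $\pi(Q_1)=\pi(P)$) arises in your $17$-adic step and is again handled in the paper by switching the base point.

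\textbf{Brauer group.} Your plan is equivalent to the paper's. The paper computes the upper bound via Hochschild--Serre for $X$ directly, using $\Pic\bar X\cong\Z$ with the nontrivial Galois action from $\Gal(\Q(\sqrt{17})/\Q)$ (obtained from the sequence $0\to\Z\to\Pic\bar Y\to\Pic\bar X\to 0$); your Leray-for-$\pi$ approach amounts to the same thing. For non-triviality the paper cites a cup-product criterion rather than your residue-plus-evaluation argument, but both are valid.
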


It is interesting to compare this result with the ``easy fibration method'' of~\cite[Proposition~3.1]{CTX:AA-2013}.  We have a fibration $\pi \colon X \to Y$, and the base $Y$ satisfies strong approximation.  However, the fibres are isomorphic to $\mathbf{G}_{\mathrm{m}}$, which drastically fails to satisfy strong approximation, so we cannot use that method to conclude anything about strong approximation on $X$.

\section{Quadric surfaces}

In this section we gather some basic facts about quadric surfaces.  Any non-singular quadric surface $Y \subset \P^3$ over a field $k$ of characteristic different from $2$ may be defined by an equation of the form $\mathbf{x}^T \mathbf{M} \mathbf{x} = 0$, where $\mathbf{M}$ is an invertible $4\times 4$ matrix with entries in $k$.  We define $\Delta_Y \in k^\times / (k^\times)^2$ to be the class of the determinant of $\mathbf{M}$, which is easily seen to be invariant under linear changes of coordinates.
If $\kbar$ is an algebraic closure of $k$ and $\bar{Y}$ is the base change of $Y$ to $\kbar$, then $\Pic\bar{Y}$ is isomorphic to $\Z^2$, generated by the classes of the two families of lines on $\bar{Y}$ \cite[Example~II.6.6.1]{Hartshorne:AG}.

\begin{lemma}\label{lem:quad}
Let $k$ be a field of characteristic not equal to $2$, and let $Y \subset \P^3_k$ be a non-singular quadric surface.  Then the two families of lines on $\bar{Y}$ are defined over the field $k(\sqrt{\Delta_Y})$, and are conjugate to each other.
\end{lemma}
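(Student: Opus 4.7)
The plan is to reduce to a diagonal form by a $k$-linear change of coordinates, then write down the two rulings explicitly over $\bar{k}$ and track how Galois acts on them. After diagonalising, $Y$ is defined by $f = \sum_{j=0}^3 a_j X_j^2 = 0$, and $\Delta_Y \equiv a_0 a_1 a_2 a_3 \pmod{(k^\times)^2}$. Choosing $\alpha_j = \sqrt{a_j}$ and $i = \sqrt{-1}$ in $\bar{k}$, the identity
\[
f = (\alpha_0 X_0 + i\alpha_1 X_1)(\alpha_0 X_0 - i\alpha_1 X_1) + (\alpha_2 X_2 + i\alpha_3 X_3)(\alpha_2 X_2 - i\alpha_3 X_3)
\]
rewrites $Y$ in the Segre form $UV + ST = 0$ in the new linear forms $U, V, S, T$, and the two rulings become the standard parametric families $\{U = \lambda S,\ T = -\lambda V\}$ and $\{U = \mu T,\ S = -\mu V\}$, indexed by $\lambda, \mu \in \P^1$.

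The core of the proof is a short Galois calculation. A general $\sigma \in \Gal(\bar{k}/k)$ is specified on the relevant subfield by signs $\epsilon_j = \sigma(\alpha_j)/\alpha_j$ and $\eta = \sigma(i)/i$. Applying $\sigma$ to the defining equations of a line in one ruling and comparing with the parametric family shows that $\sigma$ preserves each ruling if and only if $\epsilon_0\epsilon_1\epsilon_2\epsilon_3 = 1$, the $\eta$-contributions cancelling; otherwise $\sigma$ exchanges the two rulings. Since $\sigma(\sqrt{\Delta_Y})/\sqrt{\Delta_Y} = \epsilon_0\epsilon_1\epsilon_2\epsilon_3$, this character is precisely that of the extension $k(\sqrt{\Delta_Y})/k$, yielding both claims of the lemma at once.

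The main obstacle is essentially bookkeeping. The individual lines in the rulings have coefficients involving $\alpha_j$ and $i$ separately, so they need not descend to $k(\sqrt{\Delta_Y})$; what descends is the family, viewed as a connected component of the Fano scheme of lines on $\bar{Y}$. Once this is set up correctly, the sign calculation in the previous paragraph is routine, and I do not foresee any deeper conceptual difficulty.
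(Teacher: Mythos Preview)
Your argument is correct and takes a genuinely different route from the paper's.  After diagonalising (as you also do), the paper works directly over $k$ with the Fano scheme of lines: parametrising the lines through $(1:0:a:b)$ and $(0:1:c:d)$, the incidence conditions cut out an affine patch of the Fano scheme inside $\A^4_k = \Spec k[a,b,c,d]$, and one checks by hand that this patch splits into two plane conics lying in the conjugate planes $qra = \pm\sqrt{\Delta_Y}\,d$, $qsb = \mp\sqrt{\Delta_Y}\,c$ (here $p,q,r,s$ are the diagonal entries).  This exhibits the field of definition of each component without ever leaving $k$ or tracking a Galois action.  Your approach instead base-changes to $\bar{k}$ immediately, puts the quadric in Segre form via auxiliary square roots $\alpha_j$ and $i$, and then recovers the field of definition from a sign calculation on $\Gal(\bar{k}/k)$; the geometry of the rulings is more transparent this way, at the cost of the descent bookkeeping you correctly flag in your final paragraph (individual lines involve $\alpha_j,i$ separately and need not descend --- only the connected components of the Fano scheme do).  Both arguments are elementary and of comparable length; the paper's has the mild advantage of yielding explicit $k$-equations for the two components, while yours makes the role of the quadratic character $\sigma \mapsto \epsilon_0\epsilon_1\epsilon_2\epsilon_3$ more visible.
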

\begin{proof}
We may assume that the matrix $\mathbf{M}$ defining $Y$ is diagonal, with entries $p,q,r,s$.  Following~\cite[Section~IV.3.2]{EH:GS}, we explicitly compute an open subvariety of the Fano scheme of lines on $Y$ by calculating the conditions for the line through $(1:0:a:b)$ and $(0:1:c:d)$ to lie in $Y$.  The resulting affine piece of the Fano scheme is given by
\[
\{ p + ra^2 + sb^2 = 0, rac+ sbd = 0, q + rc^2 + sd^2 = 0 \} \subset \A^4_k = \Spec k[a,b,c,d].
\]
This is easily verified to consist of two geometric components, each a plane conic, one contained in the plane $qra=-\sqrt{\Delta_Y} d, qsb = \sqrt{\Delta_Y} c$ and the other in the conjugate plane.
\end{proof}

\begin{lemma}\label{lem:weil}
Let $Y$ be a non-singular quadric surface over the finite field $\F_q$, with $q$ odd.  Then
\[
\# Y(\F_q) = \begin{cases}
q^2+2q+1 & \text{if $\Delta_Y \in (\F_q^\times)^2$;} \\
q^2+1 & \text{otherwise.}
\end{cases}
\]
\end{lemma}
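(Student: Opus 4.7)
The plan is to use the rulings of Lemma~\ref{lem:quad} to realise $Y$ as either $\P^1_{\F_q} \times \P^1_{\F_q}$ or a quadratic twist thereof, and then count. As a preliminary I note that $Y(\F_q)$ is non-empty, by Chevalley--Warning: a homogeneous quadratic in four variables over $\F_q$ has a non-trivial zero.

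Recall that over $\bar{\F_q}$ each family of lines on $Y$ is parametrised by a smooth conic (the base $B_i$ of the $i$-th ruling), and the natural morphism $\bar{Y} \to \bar{B}_1 \times \bar{B}_2$ sending a point $P$ to the pair of lines through $P$ is an isomorphism. In the split case $\Delta_Y \in (\F_q^\times)^2$, Lemma~\ref{lem:quad} tells us both rulings are defined over $\F_q$, so each $B_i$ descends to $\F_q$. Each $B_i$ has an $\F_q$-point, namely the unique line in the $i$-th ruling through a chosen point of $Y(\F_q)$, so $B_i \cong \P^1_{\F_q}$. Hence $Y \cong \P^1_{\F_q} \times \P^1_{\F_q}$ over $\F_q$, giving $\#Y(\F_q) = (q+1)^2 = q^2 + 2q + 1$.

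In the non-split case, Lemma~\ref{lem:quad} says the rulings are defined over $\F_{q^2}$ and swapped by the non-trivial element $\sigma$ of $\Gal(\F_{q^2}/\F_q)$. Each $B_i$ is then a smooth conic over $\F_{q^2}$ with an $\F_{q^2}$-point, so $B_i \cong \P^1_{\F_{q^2}}$. Choosing such an isomorphism for $B_1$ and its $\sigma$-conjugate for $B_2$ yields an identification $Y_{\F_{q^2}} \cong \P^1_{\F_{q^2}} \times \P^1_{\F_{q^2}}$ that intertwines the $q$-power Frobenius on $Y$ with the map $(x, y) \mapsto (F(y), F(x))$ on the product, where $F$ denotes the $q$-power Frobenius on $\P^1(\F_{q^2})$. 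The fixed points of this involution are the pairs $(x, F(x))$ with $x \in \P^1(\F_{q^2})$, yielding $\#Y(\F_q) = \#\P^1(\F_{q^2}) = q^2 + 1$.

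I expect the main technical point to be tracking the Galois action carefully in the non-split case, to justify the stated form of the Frobenius on $\P^1_{\F_{q^2}} \times \P^1_{\F_{q^2}}$. A more computational alternative, avoiding the descent argument, is to invoke the classical formula for the number of zeros of a non-degenerate quadratic form over $\F_q$ (whose answer depends on the square class of the discriminant) and then projectivise; both routes give the same count.
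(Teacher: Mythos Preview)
Your argument is correct and takes a genuinely different route from the paper.  The paper proves the lemma via the Lefschetz trace formula for \'etale cohomology: it identifies $\H^2(\bar{Y},\Q_\ell)$ with $\Pic\bar{Y}\otimes\Q_\ell$ (twisted) via the cycle class map, reads off the Frobenius action on $\Pic\bar{Y}\cong\Z^2$ from Lemma~\ref{lem:quad} (trivial when $\Delta_Y$ is a square, the swap otherwise), and obtains trace $2q$ or $0$ accordingly.  Your approach instead uses the rulings to identify $Y$ over $\F_q$ with $\P^1\times\P^1$ in the split case, and in the non-split case to recognise $Y$ as (essentially) the Weil restriction $\mathrm{Res}_{\F_{q^2}/\F_q}\P^1$, whence $Y(\F_q)\cong\P^1(\F_{q^2})$.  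Your route is more elementary---no $\ell$-adic cohomology---and closer in spirit to the ``direct computation'' the paper alludes to but declines to give; the price is the descent bookkeeping you flag in the non-split case, which is most cleanly handled by arguing with the lines themselves (a Frobenius-fixed point corresponds to a pair $(L_1,L_2)$ with $L_2=F(L_1)$ and $L_1=F(L_2)$, hence to an $\F_{q^2}$-point of $B_1$) rather than by writing down coordinates on $\P^1\times\P^1$.  The paper's approach, by contrast, packages the Galois action into the trace on $\H^2$ and would generalise more readily to varieties whose cohomology is known but whose geometry is less explicit.
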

\begin{proof}
This can be computed directly, but we recall how to obtain it from the Lefschetz trace formula for \'etale cohomology.
Let $\ell$ be a prime not equal to $p$.  Let $\bar{\F}_q$ be an algebraic closure of $\F_q$, let $\bar{Y}$ be the base change of $Y$ to $\bar{\F}_q$, and let $F \colon \bar{Y} \to \bar{Y}$ be the Frobenius morphism.  The Lefschetz trace formula states that $\# Y(\F_q)$ can be calculated as
\[
\# Y(\F_q) = \sum_{i = 0}^4 (-1)^i \Tr(F^* | \H^i(\bar{Y}, \Q_\ell)).
\]
Because $Y$ is smooth and projective, there are isomorphisms of Galois modules $\H^0(\bar{Y},\Q_\ell) \cong \Q_\ell$ and $\H^4(\bar{Y},\Q_\ell) \cong \Q_\ell(-2)$ (see~\cite[VI.11.1]{Milne:EC}).
We have $\bar{Y} \cong \P^1 \times \P^1$.  The standard calculation of the cohomology groups of projective space~\cite[VI.5.6]{Milne:EC},
and the K\"unneth formula~\cite[Corollary~VI.8.13]{Milne:EC}, give
$\H^i(\bar{Y},\Q_\ell)=0$ for $i$ odd, and show that $\H^2(\bar{Y},\Q_\ell)$ has dimension $2$.
This reduces the formula to
\[
\# Y(\F_q) = q^2 + 1 + \Tr(F^* | \H^2(\bar{Y},\Q_\ell)).
\]
Moreover, the cycle class map (arising from the Kummer sequence) gives a Galois-equivariant injective homomorphism
\[
\Pic \bar{Y} \otimes_\Z \Q_\ell \to \H^2(\bar{Y},\Q_\ell(1)),
\]
which by counting dimensions must be an isomorphism.
If $\Delta_Y$ is a square in $\F_q$, then the Galois action is trivial and we obtain (after twisting) $\Tr(F^* | \H^2(\bar{Y},\Q_\ell)) = 2q$.  If $\Delta_Y$ is not a square in $\F_q$, then $F^*$ acts on $\Pic \bar{Y} \cong \Z^2$ by switching the two factors, so with trace zero.  In either case we obtain the claimed number of points.  (Note that, in the first case, $Y$ is isomorphic to $\P^1 \times \P^1$, so we should not be surprised that it has $(q+1)^2$ points.) 
\end{proof}

\section{Proof of the theorem}

Firstly, we calculate the Brauer group of $X$; it is convenient to do so in more generality.

\begin{lemma}\label{lem:br}
Let $k$ be a field of characteristic zero, let $Y \subset \P^3_k$ be a smooth quadric surface, and let $X \subset \A^4_k$ be the punctured affine cone over $Y$.
If $\Delta_Y \in (k^\times)^2$, then we have $\Br X = \Br k$. 
Otherwise, suppose that $X$ has a $k$-rational point $P$, and let $g$ be a homogeneous linear form defining the tangent hyperplane to $X$ at $P$.  Then $\Br X / \Br k$ has order $2$, and is generated by the class of the quaternion algebra $(\Delta_Y, g)$.  This class does not depend on the choice of $P$.
\end{lemma}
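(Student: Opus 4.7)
The plan is to compute $\Br X$ by feeding the geometric data of $\bar X$ into the Hochschild--Serre spectral sequence for \'etale cohomology with coefficients in $\mathbf{G}_{\mathrm{m}}$, and then, in the non-square case, to exhibit the class $(\Delta_Y, g)$ as an explicit generator of $\Br X / \Br k$.

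I would first compute the invariants of $\bar X$. Since the affine cone $\bar C \subset \A^4_{\bar k}$ is a normal hypersurface and $\{O\}$ has codimension $3$, Hartogs gives $\Gamma(\bar X, \mathcal{O}) = \bigoplus_{d \ge 0} \H^0(\bar Y, \mathcal{O}_{\bar Y}(d))$, whose units are just the nonzero constants, so $\Gamma(\bar X, \mathcal{O})^\times = \bar k^\times$. The $\mathbf{G}_{\mathrm{m}}$-torsor $\bar X \to \bar Y$ produces the exact sequence
\[
\Z \xrightarrow{1 \mapsto [\mathcal{O}_{\bar Y}(1)]} \Pic \bar Y \to \Pic \bar X \to 0,
\]
and since $\bar Y \cong \P^1 \times \P^1$ has Picard group $\Z^2$ generated by the two rulings, with $\mathcal{O}_{\bar Y}(1) = (1,1)$, one finds $\Pic \bar X \cong \Z$. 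By Lemma~\ref{lem:quad}, Galois acts on $\Pic \bar Y$ trivially when $\Delta_Y \in (k^\times)^2$ and by swapping the factors otherwise, which descends to the trivial action (respectively multiplication by $-1$) on $\Pic \bar X \cong \Z$. Finally, I claim $\Br \bar X = 0$: identify $\bar X$ with the complement of the zero section in the total space $V$ of $\mathcal{O}_{\bar Y}(-1,-1)$; since $V \to \bar Y$ is an $\A^1$-bundle, $\Br V = \Br \bar Y = 0$, and the Gysin sequence gives $\Br \bar X \hookrightarrow \H^1(\bar Y, \Q/\Z) = 0$ using that $\P^1 \times \P^1$ is simply connected.

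Feeding this into the low-degree Hochschild--Serre sequence (with Hilbert~90 killing $\H^1(G_k, \bar k^\times)$) yields
\[
0 \to \Pic X \to (\Pic \bar X)^{G_k} \to \Br k \to \Br X \to \H^1(G_k, \Pic \bar X),
\]
and $\Br k \to \Br X$ is injective since $X$ is regular. If $\Delta_Y \in (k^\times)^2$, then $(\Pic \bar X)^{G_k} = \Z$ and $\H^1(G_k, \Pic \bar X) = 0$, forcing $\Br X = \Br k$. Otherwise $(\Pic \bar X)^{G_k} = 0$, and by the standard cohomology of $\Z/2$ acting by $-1$ on $\Z$, $\H^1(G_k, \Pic \bar X) = \Z/2$, so $\Br X / \Br k$ embeds in $\Z/2$.

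To prove the embedding is surjective in the non-square case, I would verify that $A = (\Delta_Y, g)$ lies in $\Br X$ and is nontrivial modulo $\Br k$. The divisor $\{g = 0\} \cap X$ is the pair of geometric planes through the line $\bar k \cdot P$ corresponding to the two rulings of $\bar Y$ through $[P]$; by Lemma~\ref{lem:quad} these are defined over $k(\sqrt{\Delta_Y})$ and Galois-conjugate, so $\Delta_Y$ becomes a square in each of their residue fields, killing the residues of $A$. For nontriviality I would evaluate $A$ at two rational points $Q, Q' \in X(k)$ with $g(Q)g(Q') \neq 0$: if $A$ lay in $\Br k$, then $(\Delta_Y, g(Q)) = (\Delta_Y, g(Q'))$ in $\Br k$, i.e.\ $g(Q)/g(Q')$ would be a norm from $k(\sqrt{\Delta_Y})$, and a judicious choice of $Q, Q'$ contradicts this. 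For independence of $P$, a second rational point $P'$ with tangent form $g'$ gives $A - A' = (\Delta_Y, g/g')$, which by the same residue argument lies in $\Br X$; as $\Br X / \Br k$ has order $2$ and both $A, A'$ are nontrivial modulo $\Br k$, the difference must lie in $\Br k$. The main obstacle I anticipate is this nontriviality check: Hochschild--Serre only bounds $\Br X / \Br k$ above by $\Z/2$, so a concrete specialisation argument is needed to certify that $(\Delta_Y, g)$ is genuinely new rather than a pullback from $\Br k$.
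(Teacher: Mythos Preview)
Your overall plan matches the paper's: compute the geometric invariants of $\bar X$ and run the Hochschild--Serre spectral sequence. Your argument for $\Br\bar X=0$ via purity for the zero section in the total space of $\mathcal{O}_{\bar Y}(-1)$ is a valid alternative to the paper's citation of Ford's theorem, and the bound $\Br X/\Br k\hookrightarrow\H^1(G_k,\Pic\bar X)$ is derived correctly. (One small imprecision: injectivity of $\Br k\to\Br X$ uses that $X$ is geometrically integral, not merely regular; regularity gives $\Br X\hookrightarrow\Br k(X)$, and geometric integrality then gives $\Br k\hookrightarrow\Br k(X)$.)

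The genuine gap is exactly the one you flag. Your nontriviality check by specialisation---evaluate $A=(\Delta_Y,g)$ at $Q$ and $\lambda Q$ and look for $\lambda$ not a norm from $k(\sqrt{\Delta_Y})$---fails whenever the norm map $k(\sqrt{\Delta_Y})^\times\to k^\times$ is surjective, for instance over any $C_1$ field such as $k=\mathbb{C}(t)$. Over such $k$ every evaluation $A(Q)\in\Br k$ vanishes, yet the lemma still (correctly) asserts that $A$ is nontrivial in $\Br X/\Br k$: membership in $\Br k$ is a statement inside $\Br k(X)$ and cannot in general be detected on $k$-points. The paper closes this gap by invoking a direct cohomological criterion (\cite[Proposition~4.17]{Bright:thesis}), which reads off from the equality of divisors $(g)=\pi^*(L+L')$, with $L$ a line through $\pi(P)$ and $L'$ its Galois conjugate, that the image of $(\Delta_Y,g)$ under the edge map to $\H^1(G_k,\Pic\bar X)$ is the nonzero class. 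A self-contained fix in the spirit of your residue argument: pass to the smooth compactification $W=\P(\mathcal{O}_Y\oplus\mathcal{O}_Y(-1))$ of $X$, with boundary $Y_0\cup Y_\infty$; since $g$ is homogeneous of degree~$1$ it has valuation $\pm 1$ along these components, so the residue of $(\Delta_Y,g)$ there is the class of $\Delta_Y$ in $k(Y)^\times/(k(Y)^\times)^2$, which is nontrivial because $k$ is algebraically closed in $k(Y)$. Any class coming from $\Br k$ is unramified everywhere on $W$, so $(\Delta_Y,g)\notin\Br k$.
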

\begin{proof}
Let $\kbar$ be an algebraic closure of $k$, and
let $\bar{X}$ and $\bar{Y}$ denote the base changes to $\kbar$ of $X$ and $Y$, respectively.
By~\cite[Theorem~2.2]{Ford:JPAM-2001}, we have $\Br(\bar{X}) \cong \Br(\bar{Y})$; but $\bar{Y}$ is a rational variety, so its Brauer group is trivial.   So it remains to compute the algebraic Brauer group of $X$.

We claim that there are no non-constant invertible regular functions on $X$.  
Indeed, let $C \subset \A^4_k$ be the (non-punctured) affine cone over $Y$.  Because $C$ is Cohen--Macaulay and $(0,0,0,0)$ is of codimension $\ge 2$ in $C$, we have
\[
k[X] = k[C] = k[X_0,X_1,X_2,X_3] / (f)
\]
where $f$ is the homogeneous polynomial defining $Y$.  This is a graded ring and its invertible elements must all have degree $0$, so are constant.

The Hochschild--Serre spectral sequence gives an injection $\Br X / \Br k \to \H^1(k, \Pic \bar{X})$.  (Here we use $k[X]^\times = k^\times$ and $\Br \bar{X}=0$.)  By~\cite[Exercise~II.6.3]{Hartshorne:AG}, there is an exact sequence
\[
0 \to \Z \to \Pic \bar{Y} \xrightarrow{\pi^*} \Pic \bar{X} \to 0,
\]
where $\pi \colon X \to Y$ is the natural projection and the first map sends $1$ to the class of a hyperplane section of $\bar{Y}$.  Using Lemma~\ref{lem:quad} shows that $\Pic \bar{X}$ is isomorphic to $\Z$, with $G=\Gal(k(\sqrt{\Delta_Y})/k)$ acting by $-1$.
The inflation-restriction sequence shows $\H^1(k,\Pic \bar{X}) \cong \H^1(G,\Pic \bar{X})$.  If $\Delta_Y$ is a square, then this group is trivial, and we conclude that $\Br X / \Br k$ is also trivial.  
Otherwise $G=\{1,\sigma\}$ has order $2$, and we have
\[
\H^1(G,\Pic\bar{X}) \cong \hat{\H}^{-1}(G,\Pic\bar{X}) = \frac{\ker(1+\sigma)}{\im(1-\sigma)} = \frac{\Z}{2\Z}.
\]

To conclude, it is sufficient to show that the algebra $(\Delta_Y,g)$ is non-trivial in $\Br X / \Br k$.  Because the polynomial $g$ also defines the tangent plane to $Y$ at $\pi(P)$, the divisor $(g)$ is equal to $\pi^*(L+L')$, where $L$ is a line passing through $\pi(P)$ and $L'$ is its conjugate.  By~\cite[Proposition~4.17]{Bright:thesis}, this shows that $(\Delta_Y,g)$ is a non-trivial element of order $2$ in $\Br X / \Br k$.  (The reference works with a smooth projective variety, but the proof generalises easily to any smooth $X$ with $k[X]^\times = k^\times$.) 
\end{proof}

We now return to the specific case where $X$ is the punctured affine cone over the quadric surface defined by the equation~\eqref{eq:Y}.  We will need to be more careful about constant algebras than we have been up to this point.  
Recall that $\X(\Z)$ consists of points $P=(x_0,x_1,x_2,x_3)$ where $x_0,x_1,x_2,x_3$ are coprime integers satisfying the equation~\eqref{eq:Y}.  Given such a $P$, we define the linear form
\[
\ell_P = x_0 X_0 + 47 x_1 X_1 - 103 x_2 X_2 - (17 \times 47 \times 103) x_3 X_3 \in \Z[X_0,X_1,X_2,X_3]
\]
and the quaternion algebra $A_P = (17,\ell_P) \in \Br X$.
Note that the linear form $\ell_P$ does indeed define the tangent plane to $X$ at $P$, so Lemma~\ref{lem:br} shows that $A_P$ represents the unique non-trivial class in $\Br X / \Br \Q$.  We will now evaluate the Brauer--Manin obstruction associated to $A_P$.

\begin{lemma}\label{lem:easy}
Fix $P \in \X(\Z)$.  Then, for any place $v$ of $\Q$ for which $17$ is a square in $\Q_v$, we have $\inv_v A_P(Q)=0$ for all $Q \in X(\Q_v)$.
\end{lemma}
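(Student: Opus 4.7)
The plan is to reduce everything to the basic fact that the quaternion algebra $(a,b)$ over a field is split whenever $a$ is a square. On the Zariski open subvariety $U = \{\ell_P \neq 0\} \subset X$, the class $A_P$ is represented as an Azumaya algebra by the explicit symbol $(17, \ell_P)$, so for any $Q \in U(\Q_v)$ the evaluation $A_P(Q) \in \Br \Q_v$ is the class of the quaternion algebra $(17, \ell_P(Q))$. Since $17 \in (\Q_v^\times)^2$ by hypothesis, this class is trivial, giving $\inv_v A_P(Q) = 0$ for all $Q \in U(\Q_v)$.

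To extend the conclusion to all of $X(\Q_v)$, I would invoke two standard facts. First, the evaluation map $X(\Q_v) \to \Br \Q_v$, $Q \mapsto A_P(Q)$, is continuous when $\Br \Q_v$ is given the discrete topology, so the set $\{Q \in X(\Q_v) : \inv_v A_P(Q) = 0\}$ is open and closed. Second, $U(\Q_v)$ is $v$-adically dense in $X(\Q_v)$: the variety $X$ is smooth and geometrically irreducible, so the zero locus of $\ell_P$ is a proper closed subscheme of $X$, and the implicit function theorem allows any $\Q_v$-point of it to be perturbed into $U$. Combining these, the set of $Q$ with trivial invariant is closed and contains the dense subset $U(\Q_v)$, hence equals all of $X(\Q_v)$.

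The main (and essentially only) obstacle is bookkeeping at points where $\ell_P$ vanishes, where the symbol formula is a priori meaningless; the continuity/density argument above dispenses with this. One should also take care that $A_P$ is genuinely a class in $\Br X$ (not merely in $\Br U$), but this has been established in Lemma~\ref{lem:br}, so the extension of the invariant to the closed subscheme $\{\ell_P = 0\} \cap X$ is automatic.
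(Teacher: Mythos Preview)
Your argument is correct, but the paper's proof is considerably shorter and takes a different route. Rather than evaluating pointwise and then invoking continuity and density, the paper observes that the evaluation map $\Br X \to \Br \Q_v$ at any $Q \in X(\Q_v)$ factors through $\Br(X \times_\Q \Q_v)$, and that the image of $A_P = (17,\ell_P)$ in this latter group is already zero: once $17$ is a square in $\Q_v$, the symbol $(17,\ell_P)$ is trivial in $\Br \Q_v(X)$, hence (since $X_{\Q_v}$ is smooth) in $\Br(X_{\Q_v})$. This disposes of all $Q$ at once, including those on $\{\ell_P = 0\}$, without any limiting argument.

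Your approach has the merit of being entirely explicit and of illustrating the general technique of extending invariant computations across a dense open via local constancy --- machinery that is genuinely needed elsewhere in such calculations. The paper's approach, by contrast, buys brevity by working at the level of Brauer classes on the variety rather than at individual points, which is the more natural move when the obstruction class visibly dies after a base change.
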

\begin{proof}
The homomorphism $\Br X \to \Br \Q_v$ given by evaluation at $Q$ factors through $\Br(X \times_\Q \Q_v)$, but the image of $A_P$ in this group is zero.
\end{proof}

Note that Lemma~\ref{lem:easy} applies in particular to $v=\infty$, $v=2$, $v=47$ and $v=103$.

For the following lemma, let $\Y$ be the model for $Y$ over $\Z$ defined by the equation~\eqref{eq:Y}, and extend $\pi$ to the natural projection $\X \to \Y$.

\begin{lemma}\label{lem:good}
Fix $P \in \X(\Z)$.  Let $p \neq 17$ be a prime such that $17$ is not a square in $\Q_p$, and let $Q \in \X(\Z_p)$ be such that $\pi(Q) \not\equiv \pi(P) \pmod{p}$.   Then $\inv_p A_P(Q)=0$.
\end{lemma}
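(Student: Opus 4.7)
My plan is to prove the stronger statement that $\ell_P(Q) \in \Z_p^\times$. This suffices because under the hypotheses $p \neq 17$ and $17 \notin (\Q_p^\times)^2$ the extension $\Q_p(\sqrt{17})/\Q_p$ is the unramified quadratic extension of $\Q_p$, whose norm group contains every $p$-adic unit; hence $(17, u)_p = 0$ for any $u \in \Z_p^\times$.

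I would begin by identifying $\ell_P$ with the symmetric bilinear form $B$ polarising $f$, so that $\ell_P(Q) = B(P, Q)$. Reducing $P$ and $Q$ modulo $p$ gives two nonzero vectors $\bar P, \bar Q \in \F_p^4$ lying on the affine cone, and the hypothesis $\pi(P) \not\equiv \pi(Q) \pmod{p}$ translates to $\bar P$ and $\bar Q$ being linearly independent. The task is thereby reduced to showing $B(\bar P, \bar Q) \neq 0$ in $\F_p$.

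The geometric observation driving the argument is that $B(\bar P, \bar Q) = 0$ would force the polarisation identity
\[
f(a \bar P + b \bar Q) = a^2 f(\bar P) + 2ab\, B(\bar P, \bar Q) + b^2 f(\bar Q)
\]
to vanish for all $(a, b) \in \F_p^2$, so the two-dimensional $\F_p$-subspace $\langle \bar P, \bar Q \rangle$ would lie entirely on the cone. Projectively, the line in $\P^3_{\F_p}$ joining the distinct $\F_p$-points $\pi(\bar P)$ and $\pi(\bar Q)$ would lie on $\bar Y_{\F_p}$ and would be defined over $\F_p$.

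The main obstacle is to exclude this configuration, and this is where Lemma~\ref{lem:quad} is meant to do the decisive work. Since $\Delta_Y = 17$ is not a square in $\F_p$, the lemma describes the two rulings of lines on $\bar Y_{\bar\F_p}$ as being interchanged by the nontrivial element of $\Gal(\F_p(\sqrt{17})/\F_p)$. The two rulings form disjoint components of the Fano scheme of lines — a line from one ruling never coincides with one from the other — so Frobenius has no fixed point on the Fano scheme, and hence $\bar Y_{\F_p}$ contains no $\F_p$-rational line. This contradicts the putative line produced above, so $B(\bar P, \bar Q)$ must be nonzero, completing the argument.
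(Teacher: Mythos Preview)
Your proof is correct and follows essentially the same approach as the paper: both reduce to showing $\ell_P(Q)\in\Z_p^\times$ (so that the Hilbert symbol with the unramified unit $17$ vanishes), and both rule out $\ell_P(Q)\equiv 0\pmod p$ by invoking Lemma~\ref{lem:quad} to exploit that the two rulings on $\tilde Y$ are conjugate over $\F_p(\sqrt{17})$. The only cosmetic difference is that the paper phrases the contradiction as ``the tangent hyperplane section $\tilde Y\cap\{\ell_P=0\}$ is a pair of conjugate lines meeting only at $\tilde P$,'' whereas you phrase it as ``the line through $\bar P$ and $\bar Q$ would be an $\F_p$-rational point of the Fano scheme, which has none'' --- the same geometry seen from the point $\tilde P$ versus from the line $\overline{\tilde P\tilde Q}$.
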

\begin{proof}
If $\ell_P(Q)$ is not divisible by $p$, then $\ell_P(Q)$ is a norm from the unramified extension $\Q_p(\sqrt{17})/\Q_p$ and therefore we have $\inv_p A_P(Q)=0$.  

Now suppose that $\ell_P(Q)$ is divisible by $p$.  Denote by $\tilde{Y}$ the base change of $\Y$ to $\F_p$.  Let $\tilde{P},\tilde{Q} \in \tilde{Y}(\F_p)$ be the reductions modulo $p$ of $\pi(P),\pi(Q)$ respectively.
The variety $\tilde{Y}$ is a smooth quadric over $\F_p$, and the tangent space $T_{\tilde{P}} \tilde{Y}$ is cut out by the reduction modulo $p$ of the linear form $\ell_P$.  
By Lemma~\ref{lem:quad}, the scheme $\tilde{Y} \cap \{ \ell_P = 0 \}$ consists of two lines that are conjugate over $\F_p(\sqrt{17})$.  Therefore the only point of $\tilde{Y}(\F_p)$ at which $\ell_P$ vanishes is $\tilde{P}$.  It follows that $\ell_P(Q)$ can only be divisible by $p$ if $\tilde{Q}$ coincides with $\tilde{P}$.
\end{proof}

\begin{lemma}\label{lem:same}
Let $P,P' \in \X(\Z)$ be two points.  Then $A_P$ and $A_{P'}$ lie in the same class in $\Br X$.
\end{lemma}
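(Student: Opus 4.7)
The plan is to use the fact, already established in Lemma~\ref{lem:br}, that both $A_P$ and $A_{P'}$ represent the unique non-trivial class of $\Br X / \Br \Q$, so that their difference $\alpha := A_P - A_{P'}$ lies in the subgroup $\Br \Q \subset \Br X$ of constant classes.  It therefore suffices to show that $\alpha$ is trivial in $\Br \Q$, which by the Hasse--Brauer--Noether theorem reduces to checking that every local invariant $\inv_v \alpha$ vanishes.

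Since $\alpha$ is constant, for each place $v$ the invariant $\inv_v \alpha$ equals $\inv_v A_P(Q_v) - \inv_v A_{P'}(Q_v)$ for any $Q_v \in X(\Q_v)$, so we are free to choose a convenient test point separately at each place.  For any place $v$ at which $17$ is a square in $\Q_v^\times$---which covers $v = \infty, 2, 47, 103$ and every prime $p$ such that $17$ is a square mod $p$---Lemma~\ref{lem:easy} makes both terms zero.  For a prime $p \notin \{2, 17, 47, 103\}$ where $17$ is not a square, $\Y$ has good reduction (the discriminant of the quadratic form is $17 \cdot 47^2 \cdot 103^2$, a unit at $p$), and Lemma~\ref{lem:weil} guarantees that the reduction $\tilde{Y}_p$ has at least $p^2 + 1$ points over $\F_p$, comfortably more than two.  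I would pick a smooth $\F_p$-point whose reduction differs from those of $\pi(P)$ and $\pi(P')$, lift it by Hensel's lemma to a coprime tuple in $\X(\Z_p)$, and then apply Lemma~\ref{lem:good} to both $A_P$ and $A_{P'}$ to obtain $\inv_p \alpha = 0$.

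This strategy handles every place except $v = 17$ itself, at which neither preceding lemma applies; this is the main obstacle, and I would sidestep it by invoking global reciprocity.  Since $\sum_v \inv_v \alpha = 0$ and every other summand has already been shown to vanish, $\inv_{17} \alpha$ must also be zero.  Hence $\alpha = 0$ in $\Br \Q$, and therefore $A_P = A_{P'}$ in $\Br X$.  Beyond the casework over primes---ensuring $\X(\Z_p)$ is non-empty with a reduction avoiding two prescribed points---no further computation is required.
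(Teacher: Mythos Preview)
Your proposal is correct and follows essentially the same route as the paper's own proof: reduce to showing the constant class $\alpha = A_P - A_{P'} \in \Br\Q$ is trivial, verify $\inv_v \alpha = 0$ for all $v \neq 17$ by combining Lemma~\ref{lem:easy} (places where $17$ is a square) with Lemmas~\ref{lem:weil} and~\ref{lem:good} (remaining primes, via a Hensel-lifted test point avoiding both reductions), and then deduce $\inv_{17}\alpha = 0$ from the product formula. The only cosmetic difference is that you explicitly record the good-reduction check at the relevant primes, which the paper leaves implicit in its invocation of Lemma~\ref{lem:good}.
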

\begin{proof}
By Lemma~\ref{lem:br}, we already know that the difference $A = A_P - A_{P'}$ lies in $\Br \Q$.  It will be enough to show that $\inv_v A = 0$ for $v \neq 17$, for then the product formula shows $\inv_{17} A = 0$ also, and therefore $A=0$.

For $v$ for which $17$ is a square in $\Q_v$, take $Q$ to be any point of $X(\Q_v)$; then Lemma~\ref{lem:easy} shows $\inv_v A_P(Q) = \inv_v A_{P'}(Q) = 0$ and therefore $\inv_v A = 0$.

For $p \neq 17$ such that $17$ is not a square in $\Q_p$, Lemma~\ref{lem:weil} shows that $\tilde{Y} = \Y \times_\Z \F_p$ contains a point $\tilde{Q}$ that is equal neither to $\pi(P)$ nor to $\pi(P')$ modulo $p$.  Hensel's Lemma shows that $\tilde{Q}$ lifts to a point $Q \in \X(\Z_p)$.  Lemma~\ref{lem:good} shows $\inv_p A_P(Q) = \inv_p A_{P'}(Q) = 0$, so again we have $\inv_p A = 0$, completing the proof.
\end{proof}

\begin{lemma}
Fix $P \in \X(\Z)$.  For $p \neq 17$, we have $\inv_v A_P(Q)=0$ for all $Q \in \X(\Z_p)$.
\end{lemma}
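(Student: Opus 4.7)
The plan is to reduce everything to situations already handled by the preceding lemmas via a case split on the behaviour of $17$ in $\Q_p$. First, if $17$ is a square in $\Q_p$, then Lemma~\ref{lem:easy} immediately gives $\inv_p A_P(Q) = 0$. A short computation using $17 \equiv 1 \pmod 8$ and quadratic reciprocity shows that this takes care of the primes $p \in \{2, 47, 103\}$ (other than $17$ itself) at which the model $\Y$ has bad reduction. So from now on we may assume $p \notin \{2, 17, 47, 103\}$, that $17$ is not a square in $\Q_p$, and that $\Y_{\F_p}$ is a smooth quadric. If moreover $\pi(Q) \not\equiv \pi(P) \pmod p$, Lemma~\ref{lem:good} applies directly.

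The only remaining case is that $17$ is not a square in $\Q_p$ and $\pi(Q) \equiv \pi(P) \pmod p$. Here I would replace $P$ by an auxiliary integer point $P' \in \X(\Z)$ whose reduction is distinct from $\pi(P) \bmod p$, so that Lemma~\ref{lem:good} applies to the pair $(P', Q)$, and then invoke Lemma~\ref{lem:same} to deduce $\inv_p A_P(Q) = \inv_p A_{P'}(Q) = 0$.

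To produce such a $P'$, I would use Lemma~\ref{lem:weil} to pick a smooth $\F_p$-point $\tilde{R}$ of $\Y_{\F_p}$ distinct from $\pi(P) \bmod p$ (possible since $|\Y(\F_p)| \geq p^2 + 1 \geq 2$), Hensel-lift $\tilde{R}$ to a point of $Y(\Q_p)$, and then appeal to weak approximation on the smooth quadric $Y$ to find a rational point $y \in Y(\Q)$ sufficiently $p$-adically close to the Hensel lift that it still reduces to $\tilde{R}$ modulo $p$. Clearing denominators writes $y = [x_0 : x_1 : x_2 : x_3]$ with coprime integers, yielding the required $P' \in \X(\Z)$ whose image in $\Y(\F_p)$ is $\tilde{R}$. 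The main technical step is this construction of $P'$, which combines Hensel lifting, weak approximation on $Y$, and control over reductions after clearing denominators; once $P'$ is in hand, the conclusion follows immediately from the three quoted lemmas.
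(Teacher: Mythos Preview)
Your proposal is correct and follows essentially the same route as the paper: split on whether $17$ is a square in $\Q_p$, and in the non-square case use Lemma~\ref{lem:weil} together with weak approximation on $Y$ to produce an auxiliary $P' \in \X(\Z)$ with $\pi(P') \not\equiv \pi(P) \pmod p$, then combine Lemma~\ref{lem:same} with Lemma~\ref{lem:good}. Your write-up is a bit more explicit than the paper's (you spell out the Hensel lift, the clearing of denominators, and the case $\pi(Q)\equiv\pi(P)$ versus $\pi(Q)\not\equiv\pi(P)$), but the underlying argument is the same.
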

\begin{proof}
If $17$ is a square in $\Q_p$, then this follows from Lemma~\ref{lem:easy}.  Otherwise, Lemma~\ref{lem:weil} shows that $\tilde{Y} = \Y \times_\Z \F_p$ contains at least two points.  Weak approximation on $Y$ then gives a point $P' \in \X(\Z)$ such that $\pi(P)$ and $\pi(P')$ are different modulo $p$.  By Lemma~\ref{lem:same}, the algebras $A_P$ and $A_{P'}$ lie in the same class in $\Br X$, so it does not matter which we use to evaluate the invariant.  Lemma~\ref{lem:good} then gives the result.
\end{proof}

It remains to evaluate the invariant at $17$.  In the following lemma, if $Q = (y_0,y_1,y_2,y_3)$ is a point of $\X$, then $\lambda Q$ denotes the point $(\lambda y_0, \lambda y_1, \lambda y_2, \lambda y_3)$.

\begin{lemma}
Fix $P \in \X(\Z)$ and $Q \in \X(\Z_{17})$.  For any $\lambda \in \Z_{17}^\times$, having reduction $\tilde{\lambda} \in \F_{17}^\times$, we have 
\[
\inv_{17} A_P(\lambda Q) = \begin{cases}
\inv_{17} A_P(Q) & \text{if $\tilde{\lambda} \in (\F_{17}^\times)^2$} \\ 
\inv_{17} A_P(Q) + \frac{1}{2} & \text{otherwise.}
\end{cases}
\]
\end{lemma}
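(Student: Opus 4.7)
The plan is to reduce the lemma to a standard calculation of the local Hilbert symbol $(17,\lambda)_{17}$. The crucial observation is that $\ell_P$ is homogeneous of degree $1$ in the coordinates, so $\ell_P(\lambda Q) = \lambda\,\ell_P(Q)$. Provided $\ell_P(Q)\ne 0$, bilinearity of the quaternion symbol in $\Br \Q_{17}$ yields
\[
A_P(\lambda Q) \;=\; (17,\lambda\,\ell_P(Q)) \;=\; (17,\lambda) + (17,\ell_P(Q)) \;=\; (17,\lambda) + A_P(Q)
\]
in $\Br \Q_{17}$, and taking invariants gives $\inv_{17} A_P(\lambda Q) = \inv_{17}(17,\lambda) + \inv_{17} A_P(Q)$. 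The degenerate case $\ell_P(Q) = 0$ can be handled by first invoking Lemma~\ref{lem:same} to replace $P$ by another $P' \in \X(\Z)$ with $\ell_{P'}(Q) \ne 0$; since $A_P$ and $A_{P'}$ define the same class in $\Br X$, the identity above holds equally well with $A_{P'}$ in place of $A_P$, and the conclusion is unchanged.

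What remains is to evaluate $\inv_{17}(17,\lambda)$ for $\lambda \in \Z_{17}^\times$. This is a well-known formula for the Hilbert symbol at an odd prime: for odd $p$ and $u \in \Z_p^\times$, the quaternion algebra $(p,u)$ over $\Q_p$ is split if and only if the reduction $\tilde u$ is a square in $\F_p^\times$ (the extension $\Q_p(\sqrt{p})/\Q_p$ is totally ramified, so a unit is a norm from it precisely when it is a square mod $p$; see, for example, Serre, \emph{A Course in Arithmetic}, Chapter~III). Translated into invariants, this reads $\inv_p(p,u) = 0$ when $\tilde u \in (\F_p^\times)^2$ and $\inv_p(p,u) = \tfrac12$ otherwise, from which the stated formula for $\inv_{17} A_P(\lambda Q)$ is immediate.

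The proof is essentially mechanical; the only mildly delicate point is the need for a representative of the Brauer class whose ``second slot'' is non-vanishing at $Q$, which is dealt with by the replacement argument above.
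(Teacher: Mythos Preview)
Your argument is correct and follows the same route as the paper: homogeneity of $\ell_P$ plus bilinearity of the quaternion symbol reduces everything to $\inv_{17}(17,\lambda)$, with the degenerate case handled by switching to a different base point via Lemma~\ref{lem:same}. The only point you leave implicit is the \emph{existence} of $P' \in \X(\Z)$ with $\ell_{P'}(Q) \ne 0$; the paper secures this by observing (via Lemma~\ref{lem:quad}, since $17$ is a non-square in $\Q_{17}$) that $\ell_P(Q)=0$ forces $\pi(P)=\pi(Q)$, and then invoking weak approximation on $Y$ to choose $P'$ with $\pi(P') \ne \pi(Q)$.
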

\begin{proof}
Suppose that $\ell_P(Q)$ is non-zero.
Because $\ell_P$ is homogeneous of degree $1$, we have
\[
\inv_{17} A_P(\lambda Q) = \inv_{17} (17, \lambda \ell_P(Q)) = \inv_{17} A_P(Q) + \inv_{17} (17, \lambda).
\]
But $\inv_{17}(17, \lambda)$ is zero if and only if $\tilde{\lambda}$ is a square in $\F_{17}^\times$.

If $\ell_P(Q)$ is zero, then Lemma~\ref{lem:quad} shows that we have $\pi(P) = \pi(Q)$.  Using weak approximation on $Y$, we can find a point $P' \in \X(\Z)$ with $\pi(P') \neq \pi(Q)$, and Lemma~\ref{lem:same} shows that replacing $A_P$ by $A_P'$ gives the same invariant.
\end{proof}

Note that the only singular points of $\X \times_\Z \F_{17}$ are those of the form $(0,0,0,a)$, and these do not lift to points of $\X(\Z_{17})$.  So the smooth points of $\X(\F_{17})$ are precisely those that lift to $\X(\Z_{17})$.

Putting all these calculations together proves the following.  Let $U \subset X(\ad_\Q^\infty)$ be the open subset defined as
\[
U = \prod_{p \neq 17} \X(\Z_p) \times \{ Q \in \X(\Z_{17}) \mid \inv_{17} A_P(Q) = \frac{1}{2} \}.
\]
Then $U$ is a non-empty open subset that does not meet $X(\ad_\Q^\infty)^{\Br}$, showing that there is a Brauer--Manin obstruction to strong approximation away from $\infty$ on $X$.  More specifically, for any smooth point $\tilde{Q} \in \X(\F_{17})$, half of the scalar multiples of $\tilde{Q}$ lie in the image of $U$, showing that they do not lift to integer points of $\X$.

\bibliographystyle{abbrv}
\bibliography{martin}

\end{document}